\definecolor{shadecolor}{rgb}{.90,.95,1}
\newcommand{\nc}{\newcommand}
\nc{\les}{\lesssim}
\nc{\nit}{\noindent}
\nc{\nn}{\nonumber}
\nc{\D}{\partial}
\nc{\diff}[2]{\frac{d #1}{d #2}}
\nc{\diffn}[3]{\frac{d^{#3} #1}{d {#2}^{#3}}}
\nc{\pdiff}[2]{\frac{\partial #1}{\partial #2}}
\nc{\pdiffn}[3]{\frac{\partial^{#3} #1}{\partial{#2}^{#3}}}
\nc{\abs}[1] {\lvert #1 \rvert}
\nc{\cAc}{{\cal A}_c}
\nc{\cE}{{\cal E}}
\nc{\cF}{{\mathcal F}}
\nc{\cP}{{\cal P}}
\nc{\cV}{{\cal V}}
\nc{\cQ}{{\cal Q}}
\nc{\cGin}{{\cal G}_{\rm in}}
\nc{\cGout}{{\cal G}_{\rm out}}
\nc{\cO}{{\cal O}}
\nc{\Lav}{{\cal L}_{\rm av}}
\nc{\cL}{{\cal L}}
\nc{\cB}{{\cal B}}
\nc{\cZ}{{\cal Z}}
\nc{\cR}{{\cal R}}
\nc{\cT}{{\cal T}}
\nc{\cY}{{\cal Y}}
\nc{\cX}{{\cal X}}
\nc{\cXT}{{{\cal X}(T)}}
\nc{\cBT}{{{\cal B}(T)}}
\nc{\vD}{{\vec \mathcal{D}}}
\nc{\efield}{\mathcal{E}}
\nc{\vE}{{\vec \efield}}
\nc{\vB}{{\vec \mathcal{B}}}
\nc{\vH}{{\vec \mathcal{H}}}
\nc{\ty}{{\tilde y}}
\nc{\tu}{{\tilde u}}
\nc{\tV}{{\tilde V}}
\nc{\Pc}{{\bf P_c}}
\nc{\bx}{{\bf x}}
\nc{\bX}{{\bf X}}
\nc{\bXYZ}{{\bf XYZ}}
\nc{\bY}{{\bf Y}}
\nc{\bF}{{\bf F}}
\nc{\bS}{{\bf S}}
\nc{\dV}{{\delta V}}
\nc{\e}{{\epsilon}}
\nc{\dE}{{\delta E}}
\nc{\TT}{{\Theta}}
\nc{\dPsi}{{\delta\Psi}}
\nc{\order}{{\cal O}}
\nc{\Rout}{R_{\rm out}}
\nc{\eplus}{e_+}
\nc{\eminus}{e_-}
\nc{\epm}{e_\pm}
\nc{\eps}{\varepsilon}
\nc{\vnabla}{{\vec\nabla}}
\nc{\G}{\Gamma}
\nc{\w}{\omega}
\nc{\mh}{h}
\nc{\mg}{g}
\nc{\vphi}{\varphi}
\nc{\tlambda}{\tilde\lambda}
\nc{\be}{\begin{equation}}
\nc{\ee}{\end{equation}}
\nc{\ba}{\begin{eqnarray}}
\nc{\ea}{\end{eqnarray}}
\nc{\g}{\gamma}
\nc{\ol}{\overline}
\newcommand{\n}{\nu}
\newtheorem{theorem}{Theorem}[section]
\newtheorem{lemma}[theorem]{Lemma}
\newtheorem{prop}[theorem]{Proposition}
\def\R{\mathbb R}
\nc{\T}{\mathbb T}
\nc{\Z}{\mathbb Z}
\nc{\N}{\mathbb N}
\nc{\pt}{\partial_t}
\nc{\la}{\langle}
\nc{\ra}{\rangle}
\nc{\infint}{\int_{-\infty}^{\infty}}
\nc{\halfwidth}{6.5cm}
\nc{\figwidth}{10cm}
\nc{\nlayers}{L} \nc{\nsectors}{M}
\nc{\indicator}{\mathbf{1}}
\nc{\Rhole}{R_{\rm hole}}
\nc{\Rring}{R_{\rm ring}}
\nc{\neff}{n_{\rm eff}}
\nc{\Frem}{F_{\rm rem}}
\nc{\DD}{\Delta}
\nc{\cD}{\mathcal D}
\nc{\lnorm}{\left\|}
\nc{\rnorm}{\right\|}
\nc{\rnormp}{\right\|_{\ell^{p,\eps}}}
\nc{\rar}{\rightarrow}
\nc{\sgn}{{\rm sign}}
\date{\today}
\title[]{The Structure of Global Attractors for Dissipative Zakharov Systems with Forcing on the Torus}
\author{M.~B.~Erdo\smash{\u{g}}an, J.~L.~Marzuola, K. Newhall and  N.~Tzirakis}
\address{Department of Mathematics \\
University of Illinois \\
Urbana, IL 61801}
\email{berdogan@math.uiuc.edu}
\address{Department of Mathematics, UNC-Chapel Hill \\ CB\#3250
  Phillips Hall \\ Chapel Hill, NC 27599}
\email{marzuola@math.unc.edu }  
  \address{Department of Mathematics, UNC-Chapel Hill \\ CB\#3250
  Phillips Hall \\ Chapel Hill, NC 27599}
\email{knewhall@email.unc.edu }  
\address{Department of Mathematics \\
University of Illinois \\
Urbana, IL 61801}
\email{tzirakis@math.uiuc.edu}
\begin{document}

\begin{abstract}
The Zakharov system was originally proposed to study the propagation of Langmuir waves in an ionized plasma.  In this paper, motivated by the work of the first and third authors in \cite{et2}, we numerically and analytically investigate the dynamics of the dissipative Zakharov system on the torus in $1$ dimension.  We find an interesting family of stable periodic orbits and fixed points, and explore bifurcations of those points as we take weaker and weaker dissipation.  
\end{abstract}

\maketitle

\section{Introduction}
In this paper we study the dissipative Zakharov system with forcing:
\begin{equation}\label{eq:fdzakharov}
\left\{
\begin{array}{l}
iu_{t}+  u_{xx}+i\gamma u=nu+f, \,\,\,\,  x \in \T= \R/(2\pi\Z), \,\,\,\,  t\in [0,\infty),\\
n_{tt}-n_{xx}+\delta n_t =(|u|^2)_{xx}, \\
u(x,0)=u_0(x)\in H^{1}(\mathbb T), \\
n(x,0)=n_0(x)\in L^{2}(\mathbb T), \,\,\,\,n_t(x,0)=n_1(x)\in H^{-1}(\mathbb T), \,\,\,\, f \in H^{1}(\T).
\end{array}
\right.
\end{equation}
The original Zakharov  system ($\gamma=\delta=f=0$) was proposed in \cite{vz} as  a model for the collapse of Langmuir waves in an ionized plasma.  The complex valued function $u(x,t)$ denotes the slowly varying envelope of the electric field with a prescribed frequency and the real valued function $n(x,t)$ denotes the deviation of the ion density from the equilibrium.  Smooth solutions of the Zakharov system obey the following conservation laws:
\begin{equation}
\label{mass}
\|u(t)\|_{L^2(\T)}=\|u_{0}\|_{L^2(\T)}
\end{equation}
and
\begin{equation}
\label{Energy}
E(u,n,\nu)(t)= \int_{\T}|\partial_{x}u|^2dx+\frac{1}{2}\int_{\T} n^2dx+\frac{1}{2}\int_{\T}\n^2dx+\int_{\T}n|u|^2dx=E(u_{0},n_{0},n_{1})
\end{equation}
where $\nu$ is such that $n_t=\nu_{x}$ and $\nu_t=(n+|u|^2)_x$. These conservation laws identify $H^1 \times L^2 \times H^{-1}$ as the natural energy space for the system. Local and global well-posedness in the energy space was established by Bourgain \cite{jbz}. Lower regularity optimal results were obtained by Takaoka in \cite{ht}. The well-posedness theory extends to the dissipative and forced system  without difficulty \cite{et2}.  

In \cite{et2}, the first and third authors established a smoothing property for the Zakharov system, and as a corollary they proved  the existence and smoothness of a global attractor in the energy space. For a discussion of basic facts about global attractors see \cite{temam} and \cite{et2}. 
The problem with Dirichlet boundary conditions had been considered in \cite{fla} and \cite{gm} in more regular spaces than the energy space. The regularity of the attractor in Gevrey spaces with periodic boundary conditions was considered in \cite{sch}.

Here, we primarily focus on the dynamics of solutions to \eqref{eq:fdzakharov}.  For large dissipation we prove that the global attractor is a single point consisting of a unique stable stationary solution of the system.   Then, we proceed to investigate numerically the case of smaller dissipation  in the spirit of the numerical exploration of damped-forced Korteweg-de Vries equation in \cite{CabralRosa} and for the Waveguide Array Mode-Locking Model in \cite{wwsk}.  In particular, we explore equilibrium and periodic solutions, the branching of solutions, bifurcation points, period doubling and other interesting dynamical structures that arise.  

The paper is organized as follows. In Section~\ref{sec:exist}, we obtain preliminary estimates on the solutions and study the existence and uniqueness of stationary solutions. In Section~\ref{s:trivialattractor}, we prove that in the case of large dissipation, the global attractor consists of the unique stationary solution. In the remaining sections we study the small dissipation case numerically using nonlinear continuation methods.

\subsection{Acknowledgments}  
M.B.E. and N.T. were partially supported by NSF grants DMS-1201872 and DMS-0901222, respectively.  N.T. is partially supported by the University of
Illinois Research Board Grant RB-14054.  J.L.M. was partially supported by an IBM Junior Faculty development award, NSF Applied Math Grant DMS-1312874 and NSF CAREER Grant DMS-1352353, and acknowledges the Universit\"at Bielefeld as well as the University of Chicago for graciously hosting him during part of this work.  The authors wish to thank Jon Wilkening, Roy Goodman, Gideon Simpson for helpful discussions throughout the preparation of this work.  Simpson's comments especially assisted in successful the implementation of {\it AUTO} and Wilkening's assisted in implementing a preliminary version of the Adjoint Continuation Method.

\section{Existence of Stationary Solutions and Preliminary Estimates}\label{sec:exist}

We start by obtaining a simple bound on the $L^2$ norm of the solution. By multiplying the $u$-equation with $\overline u $ and integrating on $\T$ and then taking the imaginary part, we obtain
$$
\frac{d}{dt} \|u\|_2^2+2\gamma \|u\|_2^2=2\Im\int f\overline u.
$$
This implies by Gronwall's   and Cauchy-Schwarz inequalities that for sufficiently large $t$ (depending only on the $L^2$ norm of the initial data), we have
\be\label{absorb}
\|u\|_2\leq 2\frac{\|f\|_2}{\gamma}.
\ee

We now study the stationary solutions of the system \eqref{eq:fdzakharov}. 
Recall  that $n$ is real, and throughout the paper we assume that $n$ and $n_t$ are mean zero.  Let $(v, m)$ be a  stationary solution  of \eqref{eq:fdzakharov}. Taking $u_t=n_t=n_{tt}=0$ leads to 
\begin{equation}\label{eq:fdzakharovstat}
\left\{
\begin{array}{l}
  v_{xx}+i\gamma v=mv+f, \,\,\,\,  x \in {\mathbb T}, \\
 -m_{xx}  =(|v|^2)_{xx},  
\end{array}
\right.
\end{equation}
The second line of \eqref{eq:fdzakharovstat} implies that $m=-|v|^2+ax+b$. Therefore, the periodicity and the mean zero  assumption lead to $m=-|v|^2+\frac{1}{2\pi} \|v\|_2^2$. Substituting to the first equation, it suffices to study
\begin{equation}\label{eq:stat}
  \Big[\frac{\partial^2}{\partial x^2} + i\gamma -\frac1{2\pi}\|v\|_2^2+|v|^2\Big]
 v=f, \,\,\,\,  x \in {\mathbb T}.
\end{equation}
\begin{lemma} Fix $f\in L^2$ and $\gamma>0$. Any solution $v$ of \eqref{eq:stat}   satisfies the following a priori estimates
\begin{align}
\|v\|_2 &\leq \frac1\gamma \|f\|_2, \label{l2bound}\\
\|v_x\|_2&\leq C \max(\gamma^{-3}\|f\|_2^3,\gamma^{-2}\|f\|_2^2,\gamma^{-1/2}\|f\|_2). \label{h1dotbound}
\end{align}
\end{lemma}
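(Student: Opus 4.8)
The plan is to test \eqref{eq:stat} against $\overline v$ and integrate over $\T$. After integrating by parts in the second-derivative term, $\int \overline v\, v_{xx} = -\|v_x\|_2^2$, the identity reads
\[
-\|v_x\|_2^2 + i\gamma\|v\|_2^2 - \tfrac{1}{2\pi}\|v\|_2^4 + \|v\|_4^4 = \int f\,\overline v .
\]
Taking imaginary parts annihilates every term on the left except $i\gamma\|v\|_2^2$, and Cauchy--Schwarz on the right gives $\gamma\|v\|_2^2 = \Im\int f\overline v \le \|f\|_2\|v\|_2$; dividing by $\|v\|_2$ yields \eqref{l2bound} at once.

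For the gradient bound I would take real parts of the same identity, which gives
\[
\|v_x\|_2^2 = \|v\|_4^4 - \tfrac{1}{2\pi}\|v\|_2^4 - \Re\int f\,\overline v \le \|v\|_4^4 + \tfrac{1}{\gamma}\|f\|_2^2 ,
\]
where I discarded the negative quartic term and estimated the forcing by Cauchy--Schwarz followed by \eqref{l2bound}. The one remaining ingredient is a bound on $\|v\|_4^4$, for which I would invoke the one-dimensional Gagliardo--Nirenberg inequality on the torus in the form $\|v\|_4^4 \le C\|v\|_2^3\|v_x\|_2 + C\|v\|_2^4$; the lower-order correction is genuinely needed here because $\T$ is compact (constants have vanishing derivative but nonzero $L^4$ norm). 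Inserting this together with \eqref{l2bound} produces
\[
\|v_x\|_2^2 \le C\gamma^{-3}\|f\|_2^3\,\|v_x\|_2 + C\gamma^{-4}\|f\|_2^4 + \gamma^{-1}\|f\|_2^2 .
\]

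The final step treats this as a quadratic inequality $X^2 \le aX + b$ in the single unknown $X = \|v_x\|_2$, with $a = C\gamma^{-3}\|f\|_2^3$ and $b = C\gamma^{-4}\|f\|_2^4 + \gamma^{-1}\|f\|_2^2$. Since $X^2 \le aX + b$ forces $X \le a + \sqrt b$ for nonnegative $a,b$, and $\sqrt b \le \sqrt C\,\gamma^{-2}\|f\|_2^2 + \gamma^{-1/2}\|f\|_2$, the right-hand side splits into exactly the three scales $\gamma^{-3}\|f\|_2^3$, $\gamma^{-2}\|f\|_2^2$ and $\gamma^{-1/2}\|f\|_2$; bounding their sum by a constant times their maximum gives \eqref{h1dotbound}. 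I expect the only real point of care to be selecting the correct periodic Gagliardo--Nirenberg inequality with its compact-domain correction term, as everything downstream---the absorption of the linear $\|v_x\|_2$ factor and the separation into the three regimes of the maximum---is a mechanical application of Young's inequality and the quadratic formula.
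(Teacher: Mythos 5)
Your proposal is correct and follows essentially the same route as the paper: take the imaginary part of the $\overline v$-pairing for \eqref{l2bound}, then the real part combined with the periodic Gagliardo--Nirenberg inequality (including the additive $\|v\|_2^4$ correction, which the paper's estimate also carries) and Cauchy--Schwarz, and finally absorb the linear $\|v_x\|_2$ term via the quadratic inequality $X^2\le aX+b$. The only difference is that you spell out explicitly the last step---substituting \eqref{l2bound} and splitting into the three scales of the maximum---which the paper compresses into ``This and \eqref{l2bound} imply \eqref{h1dotbound}.''
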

\begin{proof}
By multiplying \eqref{eq:stat} with $\overline{v}$ and integrating on $\T$ and then taking the imaginary part of the equation we obtain that
$$\gamma\|v\|_2^2=\Im \int_\T f\overline v dx.$$
This implies \eqref{l2bound} by Cauchy-Schwarz inequality.

On the other hand, taking the real part we obtain 
$$
\int|v_x|^2 dx = \|v\|_{L^4}^4 -\frac1{2\pi} \|v\|_2^4 -\Re \int f\overline v dx.
$$
By the Gagliardo-Nirenberg and Cauchy-Schwarz inequalities, we have
$$
\|v_x\|_2^2 \leq C \big(\|v_x\|_2 \|v\|_2^3+ \|v\|_2^4 +\|f\|_2\|v\|_2\big).
$$
This and \eqref{l2bound} imply \eqref{h1dotbound}. 
\end{proof}
We now prove the existence of an $H^1$ solution $v$ of \eqref{eq:stat} for large $\gamma$ and/or small $\|f\|_{H^1}$  which is unique in a fixed ball in $H^1$. 
Uniqueness in the whole space will follow from Theorem~\ref{thm:attr} below.   
\begin{prop} Given $f\in H^1(\T)$ if  $\gamma>0$
 is sufficiently large, or for given $\gamma>0$ if $\|f\|_{H^1}$ is sufficiently small, then we have a unique solution of \eqref{eq:stat} in the ball $B:=\{v:\|v\|_{H^1}\leq \frac{2\|f\|_{H^1}}{\gamma}\}$.  Moreover $v\in H^3(\T)$.
\end{prop}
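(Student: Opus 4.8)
The plan is to recast \eqref{eq:stat} as a fixed-point equation and apply the contraction mapping principle on the ball $B$. Writing $L:=-\partial_x^2-i\gamma$, observe that on $\T$ the operator $L$ acts on the Fourier mode $e^{ikx}$ by multiplication by $k^2-i\gamma$, which is bounded away from $0$ whenever $\gamma>0$. Hence $L$ is invertible, its inverse has Fourier multiplier $(k^2-i\gamma)^{-1}$ of modulus $(k^4+\gamma^2)^{-1/2}\le\gamma^{-1}$, and since this multiplier commutes with derivatives we obtain the key bound $\|L^{-1}g\|_{H^1}\le\gamma^{-1}\|g\|_{H^1}$. Rearranging \eqref{eq:stat} as $Lv=|v|^2v-\frac1{2\pi}\|v\|_2^2\,v-f$, a solution of \eqref{eq:stat} is exactly a fixed point of
\[
\Phi(v):=L^{-1}\Big(|v|^2v-\tfrac1{2\pi}\|v\|_2^2\,v-f\Big).
\]

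The first step is to show $\Phi$ maps $B$ into itself. Here I would use that in one dimension $H^1(\T)\hookrightarrow L^\infty(\T)$ is a Banach algebra, so the cubic term obeys $\||v|^2v\|_{H^1}\lesssim\|v\|_{H^1}^3$, while $\tfrac1{2\pi}\|v\|_2^2\,\|v\|_{H^1}\le\|v\|_{H^1}^3$. Combined with the $\gamma^{-1}$ gain of $L^{-1}$ this yields $\|\Phi(v)\|_{H^1}\le\gamma^{-1}\big(\|f\|_{H^1}+C\|v\|_{H^1}^3\big)$. For $\|v\|_{H^1}\le R:=2\|f\|_{H^1}/\gamma$ the right-hand side is at most $R$ precisely when $CR^2\lesssim\gamma$, i.e. when $\gamma^3\gtrsim\|f\|_{H^1}^2$; this is guaranteed by taking $\gamma$ large for fixed $f$, or $\|f\|_{H^1}$ small for fixed $\gamma$.

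Next I would verify the contraction property. Since the $f$ term cancels in $\Phi(v)-\Phi(w)$ and the remaining nonlinearity is cubic, the algebra estimate gives $\|\Phi(v)-\Phi(w)\|_{H^1}\le C\gamma^{-1}\big(\|v\|_{H^1}^2+\|w\|_{H^1}^2\big)\|v-w\|_{H^1}\le 2CR^2\gamma^{-1}\|v-w\|_{H^1}$ for $v,w\in B$. The contraction constant $2CR^2/\gamma$ is $<1$ under the same smallness threshold $\gamma^3\gtrsim\|f\|_{H^1}^2$. Thus $\Phi$ is a contraction of the complete metric space $B$ (a closed ball in $H^1$) into itself, and the Banach fixed-point theorem produces a unique fixed point $v\in B$, which is the unique solution of \eqref{eq:stat} in $B$. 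Note that the contraction only yields uniqueness \emph{inside} $B$, which is why uniqueness in the whole space is deferred to Theorem~\ref{thm:attr}.

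Finally, the regularity claim follows by a one-step bootstrap: given the fixed point $v\in H^1$, the right-hand side $|v|^2v-\frac1{2\pi}\|v\|_2^2\,v-f$ lies in $H^1$ (again by the algebra property and $f\in H^1$), and since $L^{-1}$ gains two derivatives on the torus, $\|L^{-1}g\|_{H^3}\lesssim\|g\|_{H^1}$ because $\langle k\rangle^6/(k^4+\gamma^2)\lesssim\langle k\rangle^2$, we conclude $v\in H^3$. The argument is almost entirely routine; the only point demanding care is the bookkeeping that makes one and the same smallness threshold $\gamma^3\gtrsim\|f\|_{H^1}^2$ deliver both the self-mapping and the contraction bounds, together with the observation that $f\in H^1$ (rather than smoother) is exactly what caps the regularity gain at $H^3$.
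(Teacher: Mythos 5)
Your proof is correct, but it follows a genuinely different route from the paper. You invert only the constant-coefficient operator $L=-\partial_x^2-i\gamma$ (an explicit Fourier multiplier with modulus bound $\gamma^{-1}$ that commutes with derivatives) and treat the entire cubic term $|v|^2v-\frac1{2\pi}\|v\|_2^2v$ perturbatively on the right-hand side, closing the estimates with the algebra property of $H^1(\T)$. The paper instead inverts the full $v$-dependent operator $R_{\gamma,v}=\partial_x^2-\frac1{2\pi}\|v\|_2^2+|v|^2+i\gamma$: self-adjointness of the real part (Kato--Rellich) plus the imaginary shift gives $\|R_{\gamma,v}^{-1}\|_{L^2\to L^2}\le\gamma^{-1}$, the fixed-point map is the affine-in-$f$ map $T_{\gamma,f}(v)=R_{\gamma,v}^{-1}f$, and the $H^1$ self-mapping and contraction bounds are extracted via the resolvent identity $S^{-1}-T^{-1}=S^{-1}(T-S)T^{-1}$ applied against the free resolvent $(\partial_x^2+i\gamma)^{-1}$. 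Your route is more elementary and self-contained --- no operator theory, no resolvent identity, and the derivative gain is automatic since the multiplier is diagonal in Fourier --- at the cost of a slightly different smallness threshold ($\gamma^3\gtrsim\|f\|_{H^1}^2$ versus the paper's roughly $\gamma^{5/2}\gtrsim\|f\|_{H^1}^2$ for large $\gamma$; both are immaterial for the qualitative statement). The paper's formulation buys a map whose dependence on $f$ is linear and whose basic $L^2$ bound is uniform in $v$, the same structure it reuses verbatim in the $H^3$ bootstrap; your bootstrap instead rests on the two-derivative gain of the explicit multiplier, i.e.\ $\langle k\rangle^6/(k^4+\gamma^2)\lesssim_\gamma\langle k\rangle^2$, which is equally valid (the implicit constant depends on $\gamma$, which is harmless since $\gamma$ is fixed at that stage). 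Both proofs correctly yield uniqueness only within the ball $B$, with global uniqueness deferred to Theorem~\ref{thm:attr}.
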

\begin{proof}
 First note that by Kato-Rellich theorem the operator 
 $\frac{\partial^2}{\partial x^2} -\frac1{2\pi}\|v\|_2^2+|v|^2$ is self adjoint on $L^2(\T)$ for 
 $v\in L^\infty(\T)\subset H^1(\T)$. Therefore the operator 
$$R_{\gamma,v}:= \frac{\partial^2}{\partial x^2} -\frac1{2\pi}\|v\|_2^2+|v|^2+i\gamma$$ is invertible on 
$L^2(\T)$ and we have
\be\label{rinverse}
\|R_{\gamma,v}^{-1}\|_{L^2\to L^2}\leq \frac1\gamma.
\ee
Let 
$$T_{\gamma,f}(v):= R_{\gamma,v}^{-1} f.$$
It suffices to prove that $T_{\gamma,f}$ has a fixed point in $H^1$. To do that we will prove that $T_{\gamma,f} $ is a contraction on the ball $B$.

By the resolvent identity, 
$$
S^{-1}-T^{-1}=S^{-1}(T-S)T^{-1},
$$
we have 
$$
R_{\gamma,v}^{-1} f=\big(\frac{\partial^2}{\partial x^2}+i\gamma\big)^{-1} f - \big(\frac{\partial^2}{\partial x^2}+i\gamma\big)^{-1} \big(  -\frac1{2\pi}\|v\|_2^2+|v|^2\big) R_{\gamma,v}^{-1} f.
$$
Therefore, we obtain 
\begin{align}\label{h1bound}
\|R_{\gamma,v}^{-1} f\|_{H^1} & \leq \frac{\|f\|_{H^1}}{\gamma}+C\la\gamma^{-1/2}\ra \gamma^{-1/2} \big\| \big(  -\frac1{2\pi}\|v\|_2^2+|v|^2\big) R_{\gamma,v }^{-1} f\big\|_{L^2} \\
& \hspace{.15cm}  \leq  \frac{\|f\|_{H^1}}{\gamma}+C\la\gamma^{-1/2}\ra \gamma^{-1/2} \big\|   -\frac1{2\pi}\|v\|_2^2+|v|^2\big\|_{L^\infty} \big\|R_{\gamma,v}^{-1} f\big\|_{L^2}  \notag \\
& \hspace{.3cm} \leq
\frac{\|f\|_{H^1}}{\gamma} + C\frac{\la\gamma^{-1/2}\ra}{\gamma^{3/2}} \|v\|_{H^1}^2 \|f\|_2   \leq
\frac{\|f\|_{H^1}}{\gamma} \big(1+ C\frac{\la\gamma^{-1/2}\ra}{\gamma^{1/2}} \|v\|_{H^1}^2\big).  \notag
\end{align}
Note that in the second to last inequality we used \eqref{rinverse}.
Let $M=\frac2\gamma\|f\|_{H^1}$. The inequality above implies that for sufficiently large $\gamma$ or for sufficiently small  $\|f\|_{H^1}$, $T_{\gamma,f}$ maps $B= \{v\in H^1: \|v\|_{H^1}\leq M\}$ into itself. Thus it suffices to prove that $T_{\gamma,f}$ is a contraction. 
Again by the resolvent identity and \eqref{h1bound}, we have (for $u, v\in B$)
\begin{align*}
\|R_{\gamma,u}^{-1}f -R_{\gamma,v}^{-1} f \|_{H^1} & = \big\|R_{\gamma,u}^{-1} \big(|v|^2-|u|^2+\frac{\|u\|_2^2-\|v\|_2^2}{2\pi}\big)R_{\gamma,v}^{-1} f \big\|_{H^1}
\\
& \leq \frac{\|\big(|v|^2-|u|^2+\frac{\|u\|_2^2-\|v\|_2^2}{2\pi}\big)R_{\gamma,v}^{-1} f\|_{H^1}}{\gamma} \big(1+ C\frac{\la\gamma^{-1/2}\ra}{\gamma^{1/2}} \|u\|_{H^1}^2\big) \\ 
& \leq \big\| |v|^2-|u|^2+\frac{\|u\|_2^2-\|v\|_2^2}{2\pi} \big\|_{H^1} \frac{    \|  f\|_{H^1}}{\gamma^2} \\
& \hspace{.5cm} \times \big(1+ C\frac{\la\gamma^{-1/2}\ra}{\gamma^{1/2}} \|v\|_{H^1}^2\big)\big(1+ C\frac{\la\gamma^{-1/2}\ra}{\gamma^{1/2}} \|u\|_{H^1}^2\big)  \\
& \leq C  M \big(1+ C\frac{\la\gamma^{-1/2}\ra}{\gamma^{1/2}}M^2\big)^2 \frac{\|f\|_{H^1}}{\gamma^2} 
\|u-v\|_{H^1}  .  
\end{align*}
Therefore $T_{\gamma,f}$ is a contraction on $B$ for small $\|f\|_{H^1}$ or large $\gamma$.
Finally by the following calculation the fix point $v\in B$ is in $H^3(\T)$,
\begin{align}\label{h3bound}
\|v\|_{H^3}=\|T_{\gamma,f}(v)\|_{H^3} & \leq \la \gamma^{-1}\ra \|f\|_{H^1}  + \la \gamma^{-1}\ra  \big\|    -\frac1{2\pi}\|v\|_2^2+|v|^2\big\|_{H^1} \|R_{\gamma,v}^{-1} f\|_{H^1}  \\
& \hspace{.1cm} \leq \la \gamma^{-1}\ra \|f\|_{H^1}  + C \la \gamma^{-1}\ra \|f\|_{H^1}^3 \gamma^{-3}, \nn 
\end{align}
using the standard elliptic estimate $\| \big(\frac{\partial^2}{\partial x^2}+i\gamma\big)^{-1} f \|_{H^3} \leq \langle \gamma^{-1} \rangle \| f \|_{H^1}$.
\end{proof}

\section{Attractor in the Case of Large Dissipation}
\label{s:trivialattractor}

Recall that the energy space is $X=H^1\times L^2\times \dot H^{-1}$. We will prove under some conditions on $\gamma, \delta, \|f\|_{H^1}$ that  all solutions of  \eqref{eq:fdzakharov} converge to the stationary solution $(v,-|v|^2+\frac1{2\pi}\|v\|_2^2,0)$ in $X$ as $t\to\infty$. This also implies the uniqueness of the stationary solution $v$ under these conditions.
\begin{theorem} \label{thm:attr} Given $\|f\|_{H^1}$ and $\delta>0$, the following statement holds if  $\gamma$ is sufficiently large. Consider $(u(0),n(0),n_t(0))\in X$ where $n(0)$ and $n_t(0)$ are mean-zero. Then,  the solution $(u,n,n_t)$ of  \eqref{eq:fdzakharov} converges to the 
stationary solution $(v,-|v|^2+\frac1{2\pi}\|v\|_2^2,0)$ in $X$ as $t\to \infty$. 
\end{theorem}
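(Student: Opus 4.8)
The plan is to linearize around the stationary solution $(v,m,0)$, with $m=-|v|^2+\frac{1}{2\pi}\|v\|_2^2$, and to produce a Lyapunov functional for the difference that decays exponentially once $\gamma$ is large. Setting $w=u-v$ and $p=n-m$ and retaining $n_t$ as the third coordinate, one finds
\be\label{eq:schrdiff}
iw_t+w_{xx}+i\gamma w = mw+pu, \qquad u=v+w,
\ee
and
\be\label{eq:wavediff}
p_{tt}-p_{xx}+\delta p_t=\big(|u|^2-|v|^2\big)_{xx}=\big(2\Re(\bar v w)+|w|^2\big)_{xx}.
\ee
The target is $\|w\|_{H^1}^2+\|p\|_{L^2}^2+\|n_t\|_{\dot H^{-1}}^2\to 0$. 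Before estimating I would record that, for large $\gamma$, the dynamics is small: by \eqref{absorb} together with the absorbing-ball and smoothing estimates of \cite{et2}, every trajectory eventually enters a ball in $X$ whose radius tends to $0$ as $\gamma\to\infty$, so that after a finite time $w$, $p$ and $n_t$ are small; moreover $\|v\|_{H^1}=O(\gamma^{-1})$ by \eqref{l2bound}, $\|v_x\|_2=O(\gamma^{-1/2})$ by \eqref{h1dotbound}, and $v\in H^3$ by the Proposition. This smallness is exactly what the hypothesis ``$\gamma$ sufficiently large'' provides and is what turns a local computation into a global contraction.

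Next I would assemble a functional equivalent to the squared $X$-norm of the difference. Pairing \eqref{eq:schrdiff} with $\bar w$ and with $-\bar w_{xx}$ and taking imaginary parts produces the Schr\"odinger dissipation $\gamma\big(\|w\|_2^2+\|w_x\|_2^2\big)$, modulo coupling and forcing terms. For the wave part I would set $r=\partial_x^{-1}p$, so that \eqref{eq:wavediff} becomes $r_{tt}-r_{xx}+\delta r_t=\big(|u|^2-|v|^2\big)_x$ and the natural energy equals $\frac12\|r_t\|_2^2+\frac12\|r_x\|_2^2=\frac12\|n_t\|_{\dot H^{-1}}^2+\frac12\|p\|_2^2$. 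Since the $\delta$-damping acts only on $r_t$, I would add the usual cross term $\epsilon\int r\,r_t\,dx$, whose derivative supplies a good $-\epsilon\|r_x\|_2^2=-\epsilon\|p\|_2^2$ and is itself dominated for $\epsilon$ small relative to $\delta$; by Poincar\'e (all quantities being mean zero) this term is also controlled by the energy, so coercivity is preserved.

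The crux is the coupling term in the $\dot H^1$ estimate for $w$, which after the computation takes the shape $-2\,\Im\int pu\,\bar w_{xx}\,dx$ and cannot be bounded by the $X$-norm alone. Here I would exploit the Hamiltonian structure of the Zakharov energy \eqref{Energy}: adding to the functional the relative coupling term
\be\label{eq:coupling}
\int p\big(|u|^2-|v|^2\big)\,dx=\int p\big(2\Re(\bar v w)+|w|^2\big)\,dx,
\ee
and differentiating it along \eqref{eq:schrdiff}, the top-order contribution is exactly $+2\,\Im\int pu\,\bar w_{xx}\,dx$, which cancels the offending term; the remaining contribution $\int n_t\big(|u|^2-|v|^2\big)\,dx$ pairs $n_t\in\dot H^{-1}$ with $|u|^2-|v|^2\in\dot H^1$ and, crucially, carries a factor of $w$, so it is genuinely quadratic rather than linear in the difference. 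It is essential to subtract $|v|^2$ in \eqref{eq:coupling} rather than work with $\int p|u|^2$: the deleted piece $\int p\,|v|^2$ would otherwise generate a term $\int n_t\,|v|^2$ that is merely linear in $n_t$ with an $O(\gamma^{-1})$ coefficient, and such a term would obstruct convergence to the exact equilibrium.

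With the functional $\Phi=\|w\|_{H^1}^2+\frac12\|p\|_2^2+\frac12\|n_t\|_{\dot H^{-1}}^2+\int p\big(|u|^2-|v|^2\big)\,dx+\epsilon\int r\,r_t\,dx$ in hand, I would collect terms to obtain $\frac{d}{dt}\Phi\le -c\,\Phi+R$, where the negative part comes from $\gamma(\|w\|_2^2+\|w_x\|_2^2)$, $\delta\|r_t\|_2^2$ and $\epsilon\|p\|_2^2$, and every remainder in $R$ carries a factor that is small for large $\gamma$: either a power of $\|v\|_{H^1}\sim\gamma^{-1}$ (from the $mw$, $\bar v w$ and $|v|^2$ terms) or a factor of the small absorbing-ball radius (from the genuinely nonlinear pieces such as $|w|^2$ and $\int n_t|w|^2$), the relevant products being handled by Gagliardo--Nirenberg and Young's inequalities. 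Choosing $\gamma$ large enough that $R\le\frac{c}{2}\Phi$ yields $\Phi(t)\to 0$ exponentially, hence convergence of $(u,n,n_t)$ to $(v,m,0)$ in $X$; since every trajectory converges to the same point, the stationary solution is forced to be unique. I expect the genuine difficulty to be precisely the coupling term $\Im\int pu\,\bar w_{xx}$, which demands control beyond $X$: making the cancellation in \eqref{eq:coupling} work at the level of the difference system, while verifying that each surviving remainder truly carries a decaying power of $\gamma$, is where the real effort lies.
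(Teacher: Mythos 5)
Your proposal follows essentially the same route as the paper's proof: both perturb around the stationary solution $(v,m,0)$ and build the Flahaut-type Lyapunov functional---your $\Phi$ is, up to constants, exactly the paper's $H$, since expanding $\|\partial_x^{-1}(z_t+\epsilon z)\|_2^2$ produces your wave energy plus the cross term $\epsilon\int r\,r_t$---with the same key cancellation of $\Im\int p\,u\,\overline{w}_{xx}$ by the coupling term $\int p\big(|u|^2-|v|^2\big)$, and the same use of large $\gamma$ to make all remainders absorbable. The one bookkeeping difference is that the paper also cancels $\int n_t\big(|u|^2-|v|^2\big)$ exactly against the wave-energy derivative rather than estimating it as you do, which is why it needs only the elementary $L^2$ bound \eqref{absorb} and the stationary estimates, whereas your version leans on the stronger (and here unproven) assertion that the absorbing ball in $X$ shrinks as $\gamma\to\infty$.
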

\begin{proof}
Given solution $(u,n,n_t)$ of \eqref{eq:fdzakharov}, let $$(w,z,z_t)=(u-v,n+|v|^2-\frac1{2\pi}\|v\|_2^2,n_t).$$
Note that $z$ and $z_t$ are mean-zero. The equation for $(w,z,z_t)$ is the following
\begin{equation}\label{eq:wz}
\left\{
\begin{array}{l}
iw_{t}+  w_{xx}+i\gamma w=z(w+v)-|v|^2w+\frac1{2\pi}\|v\|_2^2 w, \,\,\,\,  x \in {\mathbb T}, \,\,\,\,  t\in [0,\infty),\\
z_{tt}-z_{xx}+\delta z_t =(|w+v|^2-|v|^2)_{xx}.
\end{array}
\right.
\end{equation}
Fix $\epsilon>0$ and let 
$$
H=\big\|\partial_x^{-1} (z_t+\e  z)\big\|_2^2+\|z\|_2^2+2\|w_x\|_2^2+2\int_\T z(|w+v|^2-|v|^2)+\|w\|_2^2. 
$$
The above quantity $H$ was introduced in  \cite{fla} to obtain bounds in the energy space. We note that $H$ is bounded by a constant multiple of the  energy norm for any fixed $\epsilon$. 

We have
\begin{align*}
\frac{d}{dt}\big\|\partial_x^{-1} & (z_t+\e  z)\big\|_2^2 =
2 \int \partial_x^{-1} (z_t+\e  z) \partial_x^{-1} (z_{tt}+\e   z_t)\\
&=2 \int \partial_x^{-1} (z_t+\e  z) \partial_x^{-1}  [(z+|w+v|^2-|v|^2)_{xx}+(\e-\delta)  z_t]\\
&=-2\int (z_t+\e  z) (z+|w+v|^2-|v|^2)-2(\delta-\e) \int  \partial_x^{-1} (z_t+\e  z) \partial_x^{-1}z_t\\
&= -\frac{d}{dt} \|z\|_2^2-2\e\|z\|_2^2 -2\int z_t( |w+v|^2-|v|^2)-2\e\int  z(|w+v|^2-|v|^2)
\\& \hspace{.5cm} -2(\delta-\e) \|\partial_x^{-1}z_t\|_2^2-2\e(\delta-\e)  \int \partial_x^{-1}z \partial_x^{-1}z_t.
\end{align*}

Using 
$$
\big\|\partial_x^{-1} (z_t+\e   z)\big\|_2^2=\big\|\partial_x^{-1}  z_t \big\|_2^2+\e^2 \|\partial_x^{-1} z\|_2^2+2\e\int \partial_x^{-1} z \partial_x^{-1} z_t,$$
we obtain the following energy-type identity
\begin{align}
\label{eqn:dt1}
\frac{d}{dt}\big\|\partial_x^{-1} (z_t+\e  z)\big\|_2^2 +\frac{d}{dt} \|z\|_2^2&= -2\e\|z\|_2^2 -2\int z_t( |w+v|^2-|v|^2) \\
& \hspace{.3cm} -2\e\int  z(|w+v|^2-|v|^2)
 -(\delta-\e) \|\partial_x^{-1}z_t\|_2^2 \notag \\
 & \hspace{.6cm} -(\delta-\e)\big\|\partial_x^{-1} (z_t+\e  z)\big\|_2^2 +(\delta-\e)\e^2\|\partial_x^{-1} z\|_2^2. \notag
\end{align}
Now consider the derivative of the remaining terms in the definition of $H$:
\begin{align}
\label{eqn:dt2}
 2\frac{d}{dt}\|w_x\|_2^2 =-4\Re \int \overline{w_{xx}} w_t =-4\gamma \|w_x\|_2^2-4\Im \int \overline{w_{xx}}\big[z(w+v)-|v|^2w\big],
\end{align}
and
\begin{align}
\label{eqn:dt3}
 2\frac{d}{dt}\int z(|w+v|^2-|v|^2) & =2\int z_t(|w+v|^2-|v|^2)+4\Re\int z \overline{w_t}(w+v)  \\
& \hspace{0cm} =2\int z_t(|w+v|^2-|v|^2)+4\Im\int z \overline{w_{xx}} (w+v)  \notag \\
& \hspace{.75cm}  -4\gamma \Re\int z\overline w (w+v)
 +4\Im \int zv \overline w \big[|v|^2-\frac1{2\pi}\|v\|_2^2 \big] . \notag
\end{align}
We also have
\begin{equation}
\label{eqn:dt4}
\partial_t \|w\|_2^2=-2\gamma\|w\|_2^2+2\Im\int z\overline w v.
\end{equation}

Combining \eqref{eqn:dt1}-\eqref{eqn:dt4}, we observe
\begin{align*}
\frac{d}{dt}H & = -2\e\|z\|_2^2  -2\e\int  z(|w+v|^2-|v|^2)
 -(\delta-\e) \|\partial_x^{-1}z_t\|_2^2-(\delta-\e)\big\|\partial_x^{-1} (z_t+\e  z)\big\|_2^2 \\
 & \hspace{.1cm}  +(\delta-\e)\e^2\|\partial_x^{-1} z\|_2^2  -4\gamma \|w_x\|_2^2+4\Im \int w \overline{w_{xx}} |v|^2 
 -4\gamma \Re\int z\overline w (w+v)  \\
 & \hspace{.2cm} 
 +4\Im \int zv \overline w \big[|v|^2-\frac1{2\pi}\|v\|_2^2 \big]  
 -2\gamma\|w\|_2^2+2\Im\int z\overline w v.
\end{align*}

Hence,
\begin{align*}
\frac{d}{dt}H & =-\e H -\e\|z\|_2^2  
 -(\delta-\e) \|\partial_x^{-1}z_t\|_2^2-(\delta-2\e)\big\|\partial_x^{-1} (z_t+\e  z)\big\|_2^2 -(2\gamma-\e)\|w\|_2^2 \\
 & \hspace{.1cm} +2\Im\int z\overline w v
 +(\delta-\e)\e^2\|\partial_x^{-1} z\|_2^2  -(4\gamma -2\e)\|w_x\|_2^2 -4\Im \int w \overline{w_{x}} (|v|^2)_x \\
 & \hspace{.2cm}
 -4\gamma \Re\int z\overline w (w+v)
 +4\Im \int zv \overline w \big[|v|^2-\frac1{2\pi}\|v\|_2^2\big].
\end{align*}
Let $\e=\min(\frac1{2\delta},\frac\delta2,\gamma)$. Since $z$ is mean-zero, the choice of $\epsilon$ implies 
$$(\delta-\e)\e^2\|\partial_x^{-1} z\|_2^2 \leq \frac\epsilon2 \|z\|_2^2. $$ 
Therefore, we have
\begin{align*}
\frac{d}{dt}H & \leq -\e H -\frac\e2\|z\|_2^2   - \gamma \|w\|_2^2-2\gamma  \|w_x\|_2^2   +2 \Big|\int z\overline w v\Big|     +4 \Big|\int w \overline{w_{x}} (|v|^2)_x\Big|  \\
& \hspace{.6cm}
 +4\gamma \Big|\int z\overline w (w+v)\Big|
 +4\Big| \int zv \overline w \Big[|v|^2-\frac1{2\pi}\|v\|_2^2\Big]\Big|   \\
 & \hspace{.1cm}
 \leq -\e H -\frac\e2\|z\|_2^2   - \gamma \|w\|_2^2-2\gamma  \|w_x\|_2^2  \\
 & \hspace{.5cm} + C\Big[ \|z\|_2\|w\|_2  \|v\|_{H^1} +\|w\|_2\|w_x\|_2\|v\|_{H^2}^2+\|z\|_2\|w\|_2\|v\|_{H^1}^3  \\
 & \hspace{2.0cm} +\gamma\|z\|_2\|w\|_{H^1}\|w+v\|_2 \Big] \\
 &  = -\e H -\frac\e2\|z\|_2^2   - \gamma \|w\|_2^2-2\gamma  \|w_x\|_2^2  \\
 & \hspace{.5cm} +  \big[ \mathcal{I} + \mathcal{II}  + \mathcal{III} + \mathcal{IV}  \big].
\end{align*}
Note that by \eqref{absorb}  we have
$$
\|w+v\|_2\leq 2\frac{\|f\|_2}{\gamma}
$$
for sufficiently large $t$. 
Using this, we can bound term $\mathcal{IV}$ by 
\begin{multline*}
2C\|f\|_2 \|z\|_2 \|w\|_{H^1}\leq \frac{\e}{10} \|z\|_2^2+ \frac{C_1\|f\|_2^2}{ \e}\|w\|_{H^1}^2
\\
\leq \frac{\e}{10} \|z\|_2^2+\frac{C_1\|f\|_2^2}{\e} \|w\|_2^2+\frac{C_1\|f\|_2^2}{\e}\|w_x\|_2^2
\leq  \frac{\e}{10} \|z\|_2^2+\frac{\gamma}{10} \|w\|_2^2+\frac{\gamma}{10}\|w_x\|_2^2,
\end{multline*}
provided that $\e\gamma\gg\|f\|_2^2$. Summands $\mathcal{I}-\mathcal{III}$ can be bounded by the same right hand side provided that 
$$\e\gamma \gg \|v\|_{H^1}^2+\|v\|_{H^1}^6,\,\,\,\,\,\text{ and } \,\,\,\gamma\gg\|v\|_{H^2}^2.$$
By the estimates on $v$, we see that for fixed $\delta$ and $\|f\|_{H^1}$, if $\gamma$ is sufficiently large, we have for sufficiently large $t$,
$$
\frac{d}{dt}H \leq -\e H.
$$
This implies that $H$ goes to zero as $t\to\infty$. 

Observe that  
$$
\Big|\int_\T z(|w+v|^2-|v|^2)\Big|\leq C \|z\|_2 \|w\|_{H^1} (\|w\|_2+\|v\|_2),
$$
and that, by \eqref{absorb} and \eqref{l2bound}, (for large $\gamma$ and $t$) we have $\|w\|_2+\|v\|_2\ll 1 $.
Therefore, we have
$$
H\geq C\big( \|z_t\|_{H^{-1}}^2+\|z\|_2^2+\|w\|_{H^1}^2\big).
$$
This completes the proof. 
\end{proof}

\section{Numerical Methods for solving forward in time}
In this section we briefly describe the numerical method we use for the Schr\"odinger-Dirac model (see, e.g., \cite{et2}).  We chose this method as it is accurate to a higher order in time, and yet uses the structure of the Dirac and Schr\"odinger equations to drive the solution.
In particular, we apply the time-splitting method of \cite{KassamTrefethen} as applied to the soliton dynamics in, for instance, \cite{potter}.  Let us recall the equivalent system to \eqref{eq:fdzakharov} derived in \cite{et2}, which is
\begin{eqnarray}
\label{e:diracschrodinger}
\left\{ \begin{array}{l}
(i \partial_t +  \partial_x^2 + i \gamma ) u = \alpha_1 \Re ( n ) u + f, \\
(i \partial_t -d + i \delta) n = \alpha_2  d (|u|^2), \\
(u(x,0), n(x,0)) = (u_0, n_0)  \in H^1 \times L^2,
\end{array} \right.
\end{eqnarray}
where $d = (-\partial_{xx})^{\frac12}$ where generally $\alpha_{1,2}$ are taken to be $1$.  We include the parameters $\alpha_{1,2}$ here to mention that another possibly interesting approach to the nonlinear continuation arguments would be to begin from a linear, decoupled model since there exists an exact solution for $\alpha_{1,2} = \gamma= 0$ when $f = \sin x$ given by $(u,n) = (-\sin x, 0)$.  However, we will not pursue this family of branches here and instead will focus on the behavior in $\gamma$ and $\eta$ for fixed values of $\alpha_{1,2}$.

Note that the system  \eqref{e:diracschrodinger} can be re-written as
\begin{eqnarray*}
\partial_t \left( \begin{array}{c}
u  \\
n  \end{array} \right) = L \left( \begin{array}{c}
u \\
n  \end{array} \right) + N (u,n,f),
\end{eqnarray*}
where 
\begin{eqnarray*}
L = \left[  \begin{array}{cc}
i   \partial_x^2 - \gamma  & 0 \\
0 & -i d - \delta
\end{array} \right]
\end{eqnarray*}
and
\begin{eqnarray*}
N = \left(  \begin{array}{c}
-i  \Re ( n )  u -i f \\
-i d ( |u|^2 ) 
\end{array} \right).
\end{eqnarray*}
The algorithm takes place as a pseudospectral method on the Fourier side, though it implements integrating factor, time-splitting, fourth-order Runge-Kutta schemes and contour integration all at once.  The key idea is to look at the evolution over a time step, $h$, as the integral
\begin{eqnarray*}
\left( \begin{array}{c}
u_{m+1}  \\
n_{m+1}  \end{array} \right) = e^{L h } \left( \begin{array}{c}
u_{m}  \\
n_{m}  \end{array} \right) + e^{Lh} \int_0^h e^{-L s} N (u (t_m + s), n( t_m +s), f( t_m +s)) ds,
\end{eqnarray*}
which can be approximated using a Runge-Kutta method (see Cox-Matthews \cite{CoxMatthews}) as
\begin{eqnarray*}
\left( \begin{array}{c}
u_{m+1}  \\
n_{m+1}  \end{array} \right) & = & e^{Lh} \left( \begin{array}{c}
u_{m}  \\
n_{m}  \end{array} \right)  + h^{-2} L^{-3}  \times \\
&& \left(  \left[  -4 -Lh + e^{Lh} ( 4 - 3Lh + (Lh)^2) \right] N (u_m, n_m, f( t_m)) + \right. \\
&& + 2 \left[ 2 + Lh + e^{Lh} (-2 + Lh) \right] ( N (a_{m,1}, a_{m,2}, f( t_m + h/2)) \\
&& \hspace{1.5cm} + N(b_{m,1}, b_{m,2}, f (t_m + h/2))) \\
&& \left. + \left[  -4 - 3h - (Lh)^2 + e^{Lh} (4-Lh) \right] N(c_{m,1}, c_{m,2}, f(t_m + h)) \right),
\end{eqnarray*}
where
\begin{eqnarray*}
a_m &=& e^{L h/2} \left( \begin{array}{c}
u_{m}  \\
n_{m}  \end{array} \right) + L^{-1} ( e^{L h/2} -Id) N (u_m,n_m,f(t_m)), \\
b_m &=& e^{L h/2} \left( \begin{array}{c}
u_{m}  \\
n_{m}  \end{array} \right) + L^{-1} ( e^{L h/2} -Id) N (a_{m,1},a_{m,2},f(t_m + h/2)), \\
c_m &=& e^{L h/2} a_m + L^{-1} ( e^{L h/2} -Id) (2 N (b_{m,1},b_{m,2},f(t_m + h/2)) - N(u_m,n_m,f(t_m)).
\end{eqnarray*}
However, such an algorithm can have problems if $L$ has eigenvalues near $0$.  To avoid such problems
the algorithm is slightly modified by evaluating contour integrals over whole discs, which are approximated by appropriate Riemann sums. 

Using the forward in time solving numerical methods described in this section, we are able to locate stable equilibrium solutions for $\gamma$ sufficiently large with respect to $\eta$ in several cases.  Then, this equilibrium solution can be fed into a nonlinear continuation method such as {\it AUTO} or the Adjoint Continuation Method of \cite{wwsk} to begin solving with particular values of $\eta$, $\gamma$, $\alpha_1$ and $\alpha_2$.

 \section{Numerical Results in the Case of Small Dissipation}

To begin, in an attempt to model the non-trivial dynamics in the Zakharov system, we follow some of the ideas in \cite{CabralRosa} to   analyze a series of numerically integrated solutions of \eqref{e:diracschrodinger}.  In our numerical experiments 
we observe a great deal of energy exchange between the Schr\"odinger and Dirac solutions, hence we will focus on relatively small energy initial data in order to justify that our numerics are valid on long time scales.  If the Fourier modes become too large at the edges of the spectrum, we do not consider the solution to be appropriately accurate, hence all simulations included here will have small contributions at high frequency.  The time scale on which we integrate is generally $T = 50.0$ with the time step $h \sim 1e-4$.  For the forward solver, we will begin by taking $32$ Fourier modes on which to evolve.  In addition, our contour integrals in the numerical evaluation of $L^{-1}$ will be taken as a mean of 64 equidistributed points along the disc.


For a range of $\eta$, given $\gamma$ sufficiently large, we observe that the dynamics tend to a fixed equilibrium solution as in Section \ref{s:trivialattractor}.  However, for $\gamma$ much smaller (or $\eta$ large), we observe much richer dynamics in the phase space, particularly in the form of periodic orbits, multiple equilibrium solutions, and period doubling bifurcations.  Though our AUTO solutions are pseudospectral in nature, the orbits we find are still stable under forward integration over many periods and are in that sense quite numerically stable.

To observe the changes in behavior as we vary $\gamma$ from the case of the trivial attractor, we use two different nonlinear continuation methods, namely we use {\it AUTO}, \cite{auto}, and the adjoint-continuation method (ACM) of Ambrose-Wilkening from \cite{aw,wwsk}.  We use {\it AUTO}  to numerically continue the solution for a wide range of $\gamma$ keeping both $16$ and $32$ modes per component of the Zakharov system.  We thus observe branching of the equilibrium points, periodic orbits, period doubling,  invariant tori, etc,   see Figures \ref{f:auto1} and \ref{f:eta}.  We use a pseudospectral implementation of the right hand side with $32$ Fourier modes per component.  For more on implementing the spectral method in AUTO, see for instance \cite{Lee-Thorpe,Pang}.  As one may expect, the nature of our orbits can change as we vary $\gamma$ or $\eta$, which accounts for movement in the bifurcation diagram presented in Figures \ref{f:auto1} and \ref{f:eta}.

\begin{figure}
     \includegraphics[width=2.5in]{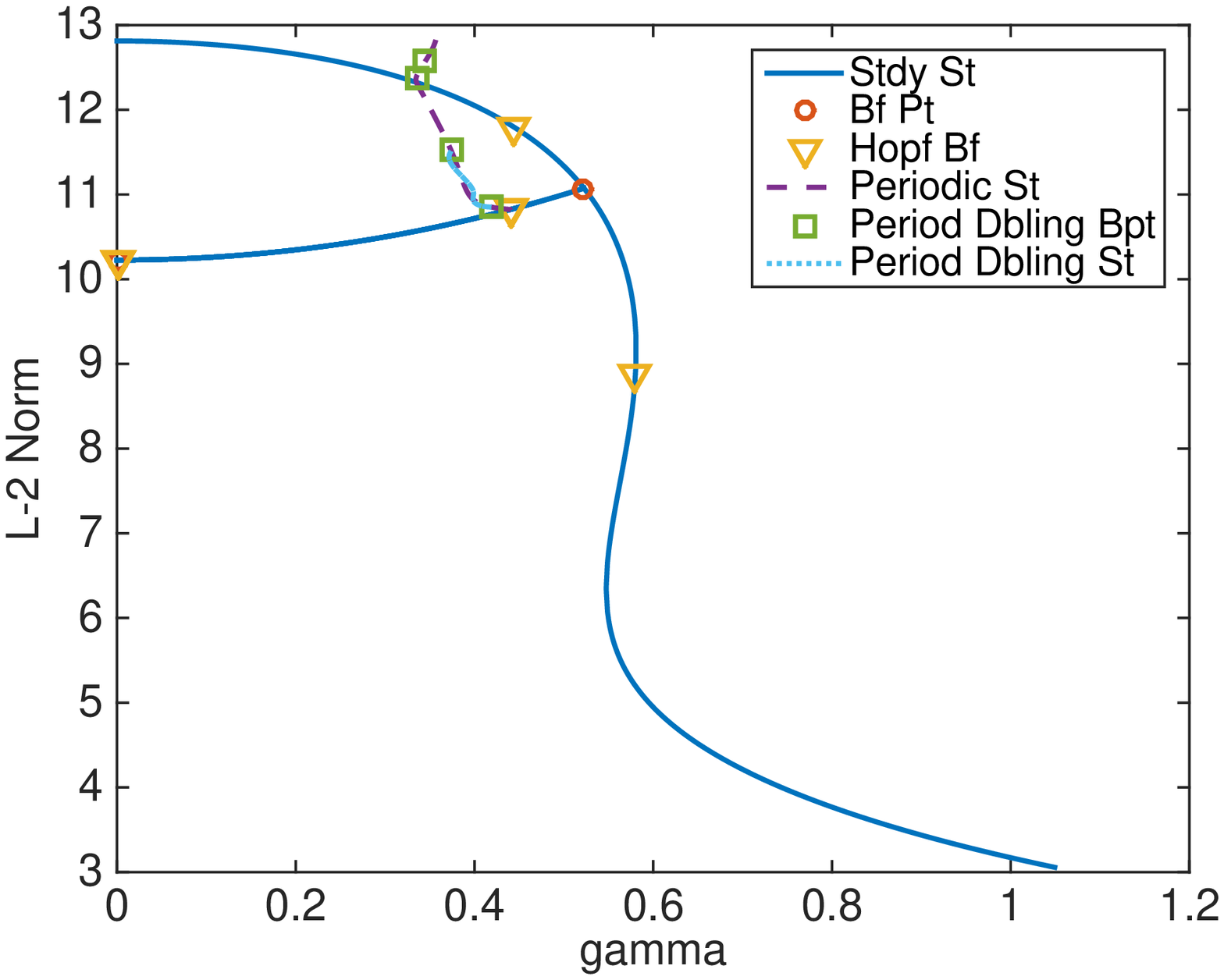}
    \includegraphics[width=2.5in]{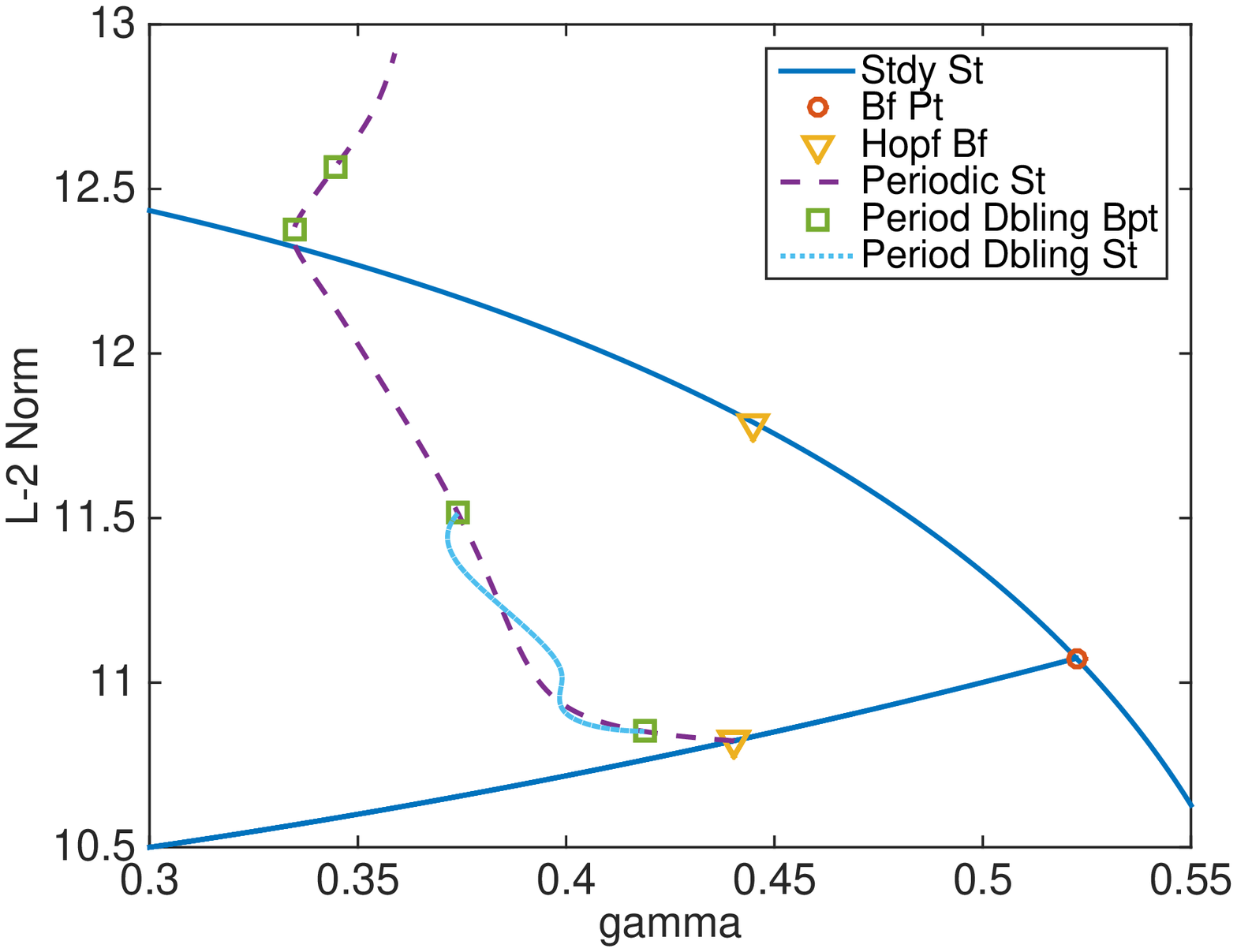}
  \caption{Using AUTO, we plot equilibrium, periodic and period doubling branches for a range of $\gamma = 0$ to $\gamma = 2.0$ with $\eta = 1.0$, $\alpha_{1,2} = 1$.  The $y$-axis is the $L^2$ norm in the case of stationary solutions and the average $L^2$ norm measured over one period for the periodic solutions.  The periodic and period doubling branches come from Hopf bifurcations and period doubling bifurcations for a pseudospectral implementation of \eqref{e:diracschrodinger} using $32$ Fourier modes on both $u$ and $n$.}
  \label{f:auto1}
\end{figure}

\begin{figure}
 \includegraphics[width=2.5in]{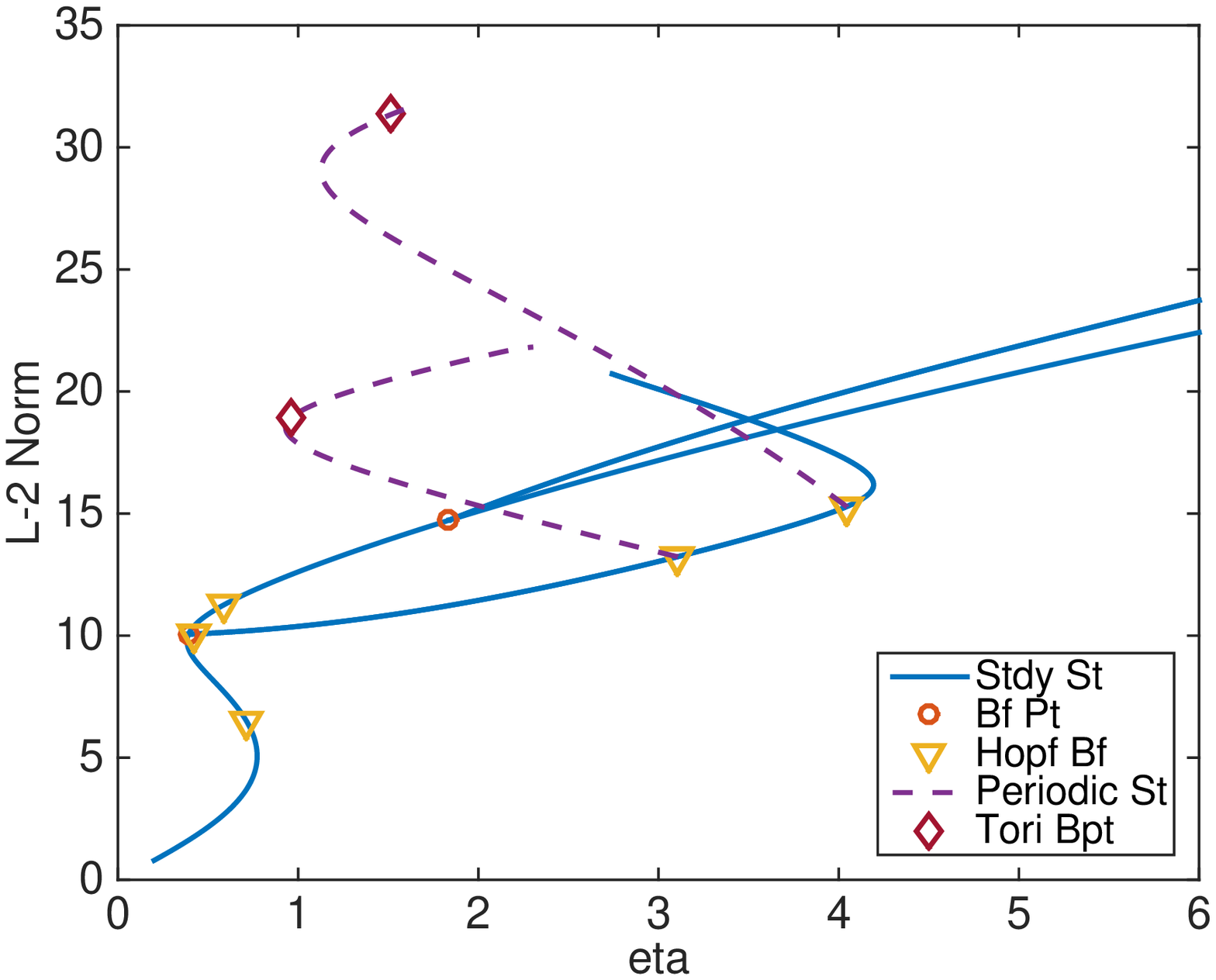}
  \includegraphics[width=2.5in]{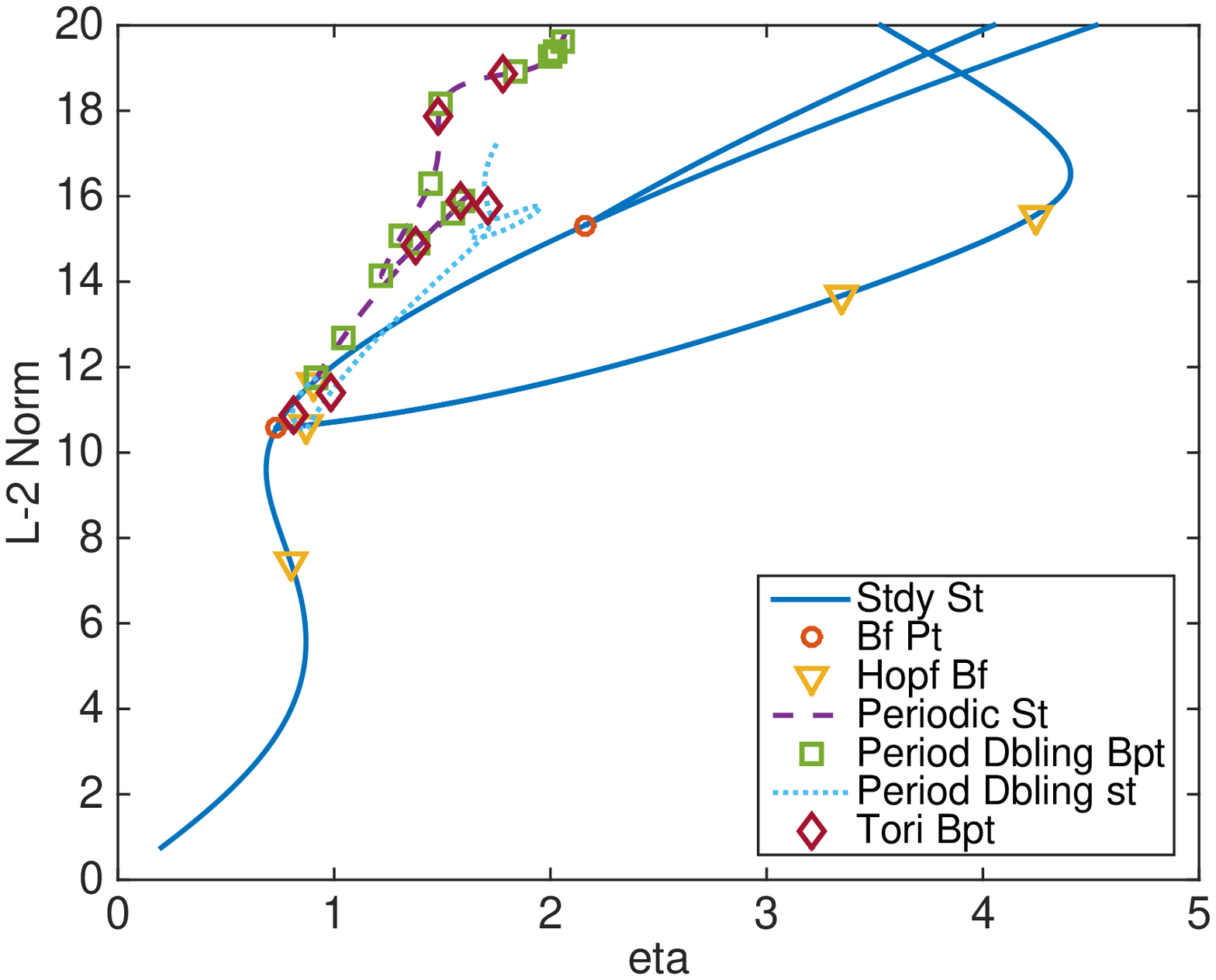}
    \includegraphics[width=2.5in]{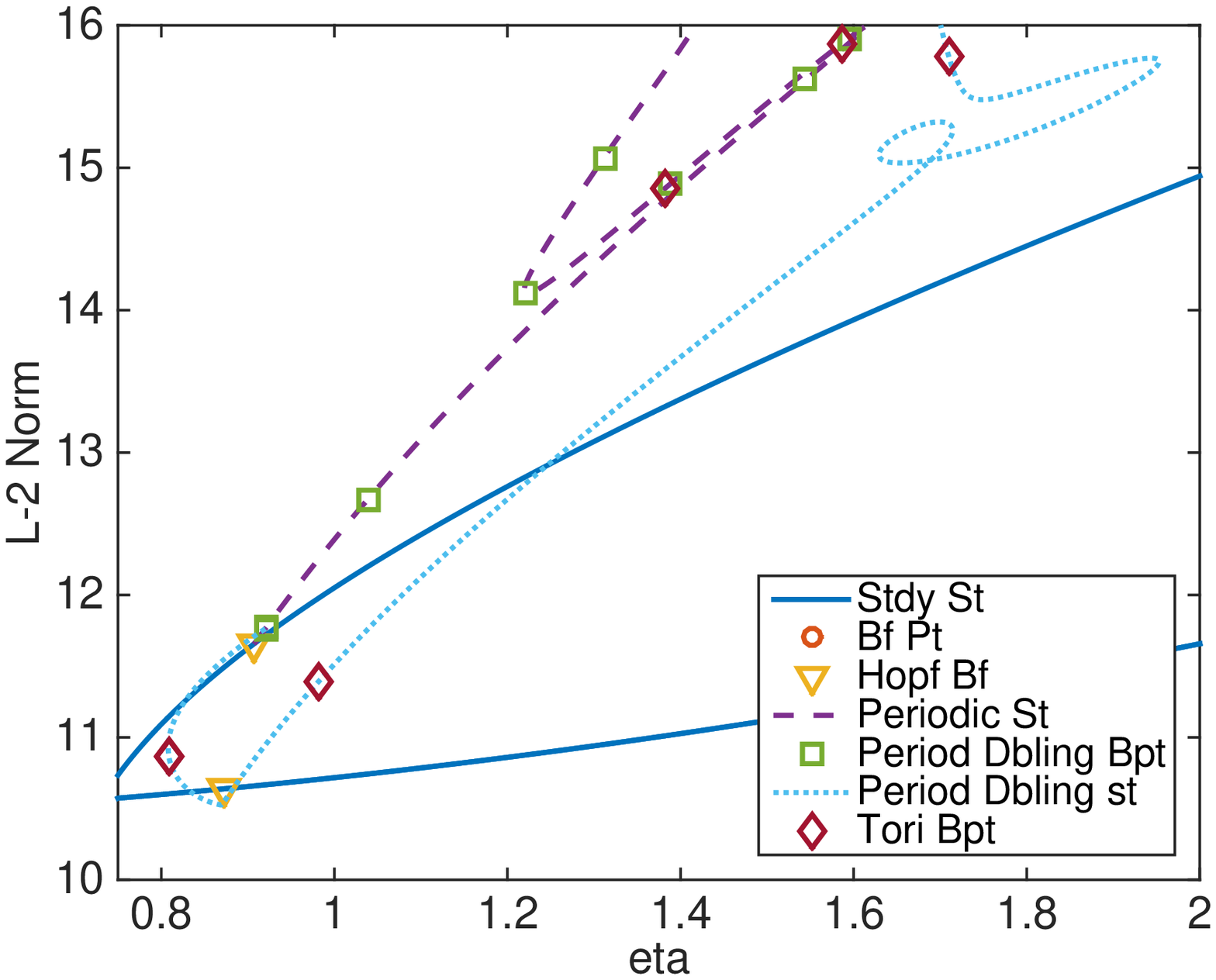}
  \caption{Top Left: Plot of the equilibrium branches and part of two periodic branches stemming from two encountered Hopf bifurcations over a range of $\eta$ from $.2$ to $14$ versus the $L^2$ norm of the total solution for $\gamma = .225$, $\alpha_1 = \alpha_2 = 1$.  Instead of period doubling bifurcations, we observe invariant tori bifurcations at this small value of $\gamma$.  Top Right:   Plot of the equilibrium branch, a periodic branch stemming from a Hopf bifurcation and a period doubling branch over a range of $\eta$ from $.2$ to $14$ versus the $L^2$ norm of the total solution for $\gamma = .4$, $\alpha_{1,2} = 1$.   Bottom: Blow up of the Top Right near the branching point.  The solutions are found using {\it AUTO}.  }
  \label{f:eta}
\end{figure}

We also implemented a simple version of the ACM method similar to that in \cite{wwsk} in {\it Matlab} in order to move along an equilibrium branch and detect a Hopf bifurcation to high accuracy.  It will be further work developing this method to efficiently study bifurcations in general, but may be worth pursuing should one wish to use many more Fourier modes and resolve more complicated types of potential orbits.    Indeed, the spatial resolution one can achieve with ACM is the primary reason to pursue other potential nonlinear continuation methods, see for instance \cite{aw}. Using a fast version of this method, we plot the spectrum of the operator linearized around the computed equilibrium solutions along a branch using the ACM method in Figure \ref{f:ACM}.  Here, we have taken $\eta = 1.0$, $\alpha_1 = 0.5$, $\alpha_2 = 1.0$ and solved over various values of $\gamma$.  

\begin{figure}
  \includegraphics[width=2.5in]{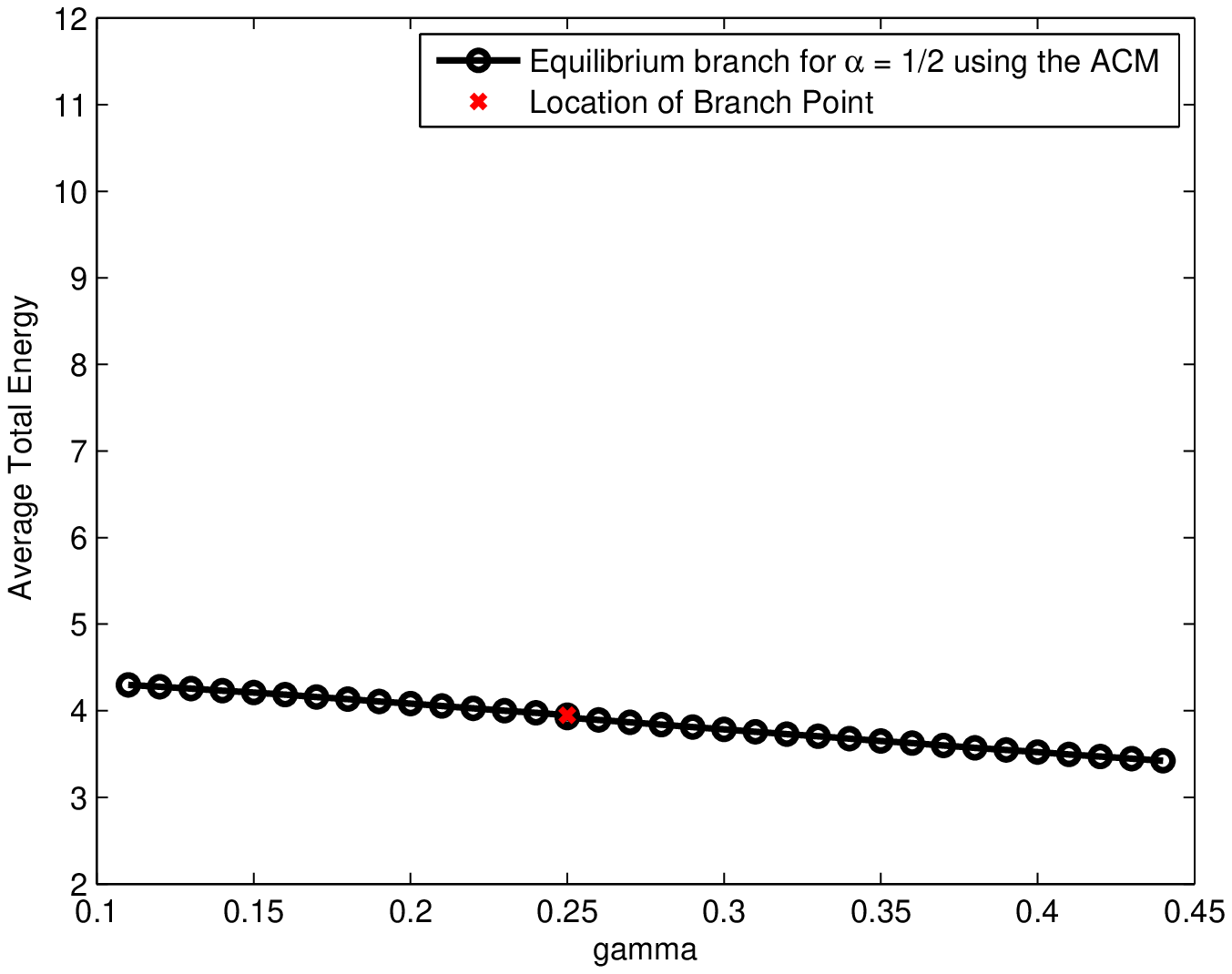}
  \includegraphics[width=3.35in]{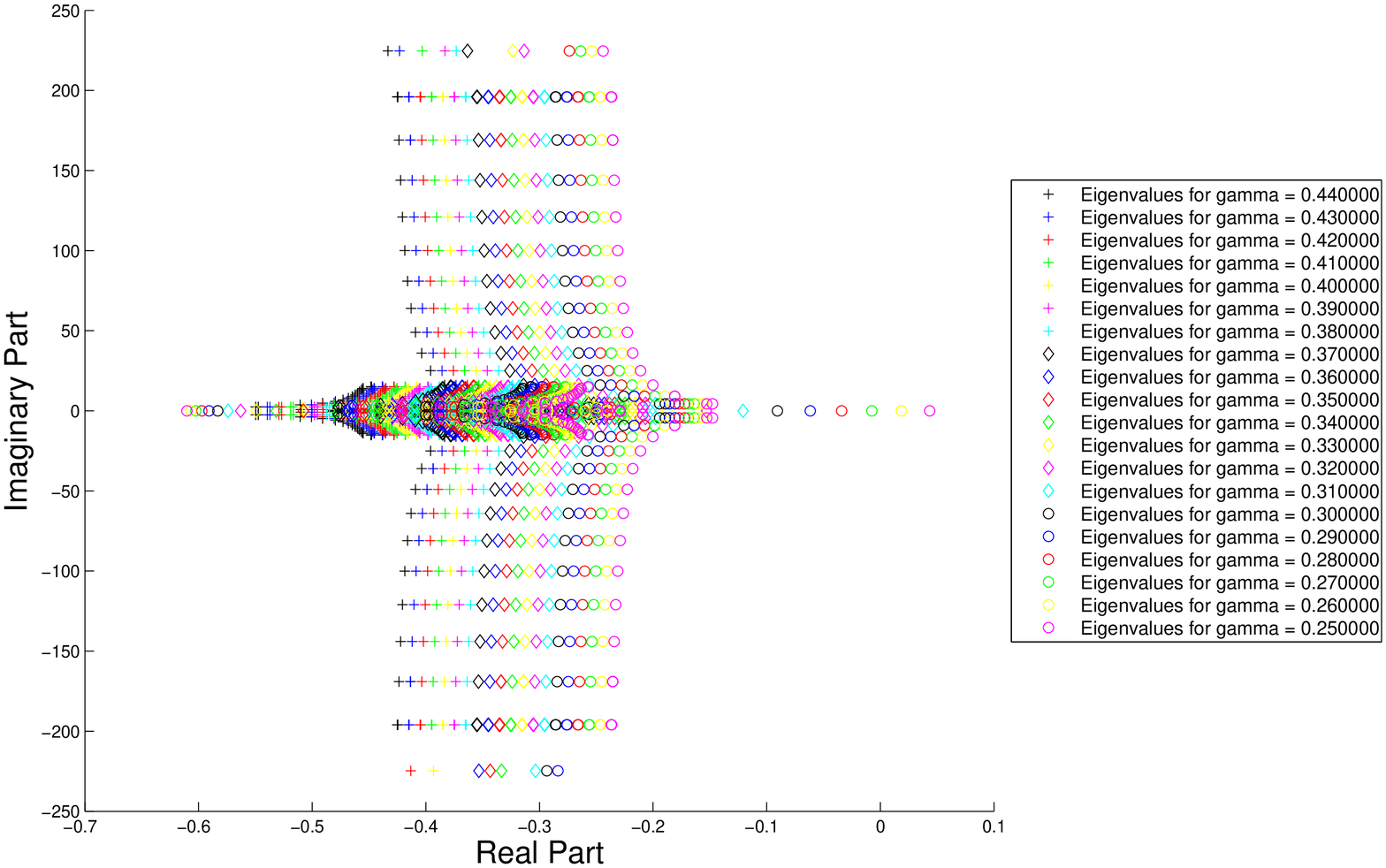}
  \includegraphics[width=3.35in]{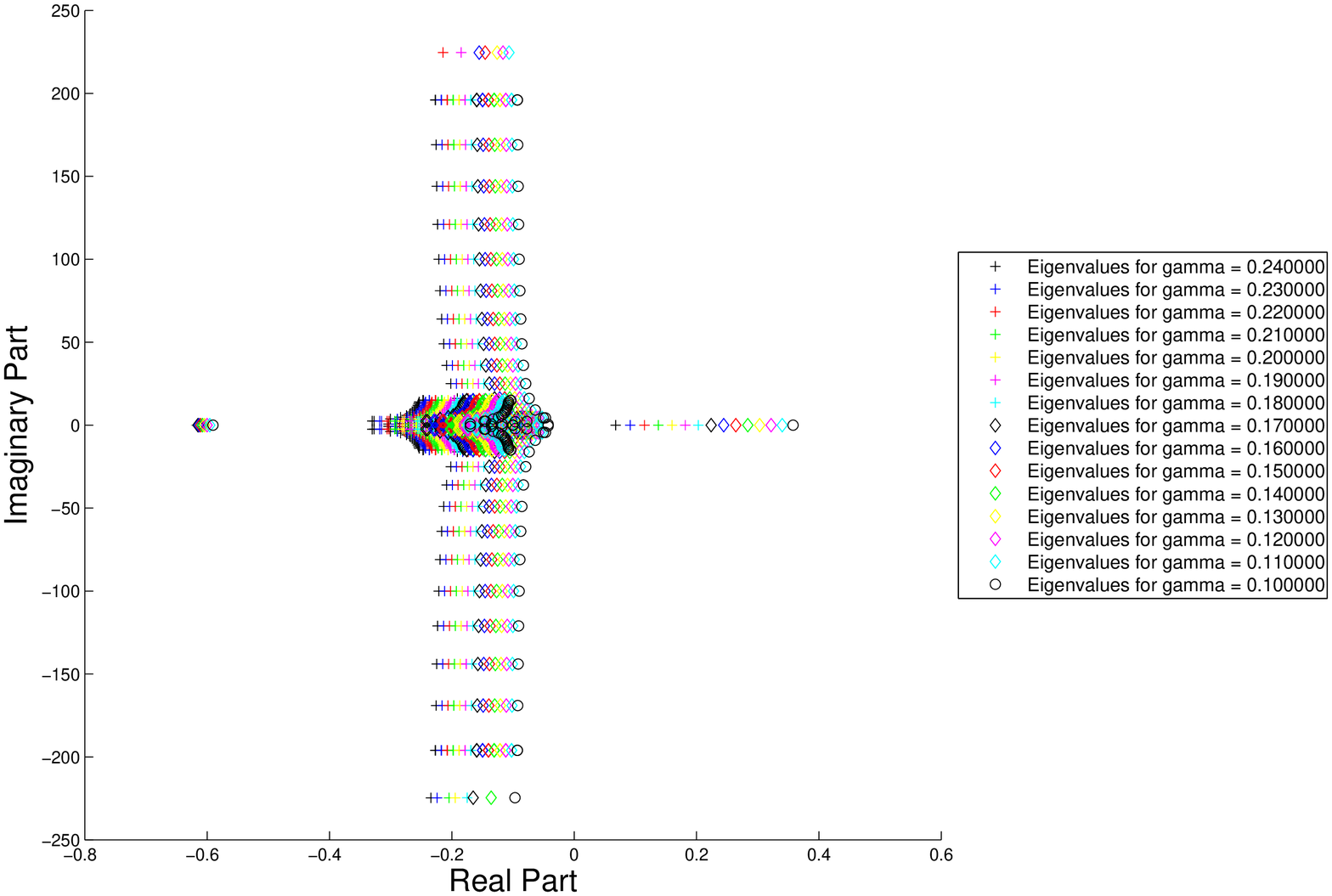}
  \caption{Plot of the lowest equilibrium branch constructed using the adjoint continuation method (top left) of \cite{aw} with $\eta = 1$, $\alpha_1 = 0.5$, $\alpha_2 = 1.0$ along with two plots of the spectrum of the linearized forward problem, for $\gamma > .25$ (top right) and $\gamma < .25$ (bottom).  The fact that the spectrum moves to the positive real part is evidence of a branch bifurcation happening near $\gamma = .25$ in the numerical continuation.}
  \label{f:ACM}
\end{figure}

The other figures present a phase plane representation of the natural energies for the Schr\"odinger and Dirac components throughout the evolution of particular solutions.  Specifically, we plot the evolution of solutions in Figure \ref{f:perorbitplots} for various values of $\eta$ corresponding to periodic branches and period doubling branches in Figure \ref{f:eta} for $\gamma = 0.4$, $\alpha_1 = \alpha_2 = 1.0$.  We also present a solution from the period doubling branch in Figure \ref{f:auto1} with $\eta = 1.0$, $\alpha_1 = \alpha_2 = 1.0$.  We plot these orbits in the energy phase plane given by $(\| u \|_{H^1} (t), \| n \|_{L^2} (t))$, which we refer to as the Schr\"odinger Energy vs. Dirac Energy phase plane coordinates motivated by \eqref{Energy}.

\begin{figure}
    \includegraphics[width=2.5in]{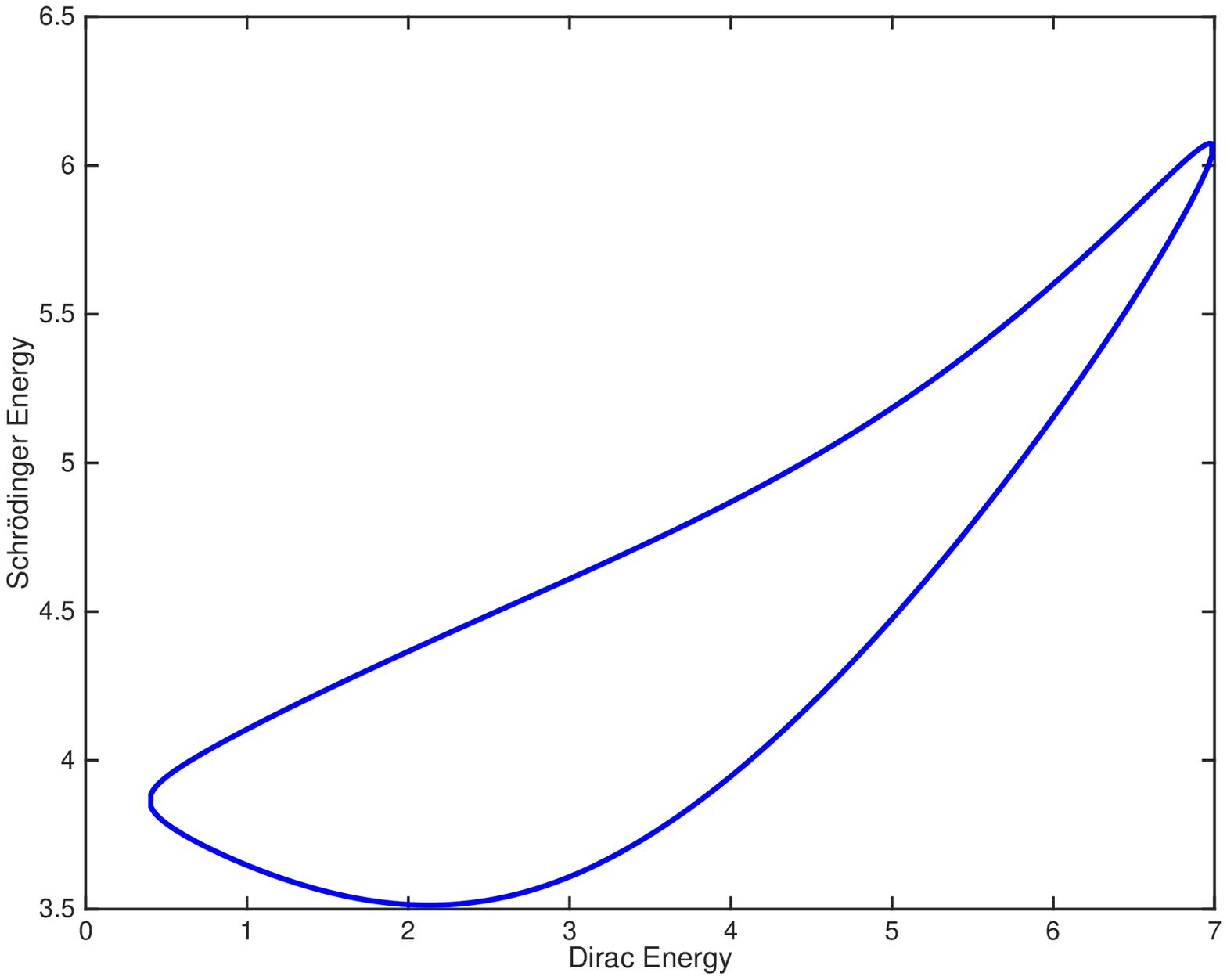}
   \includegraphics[width=2.5in]{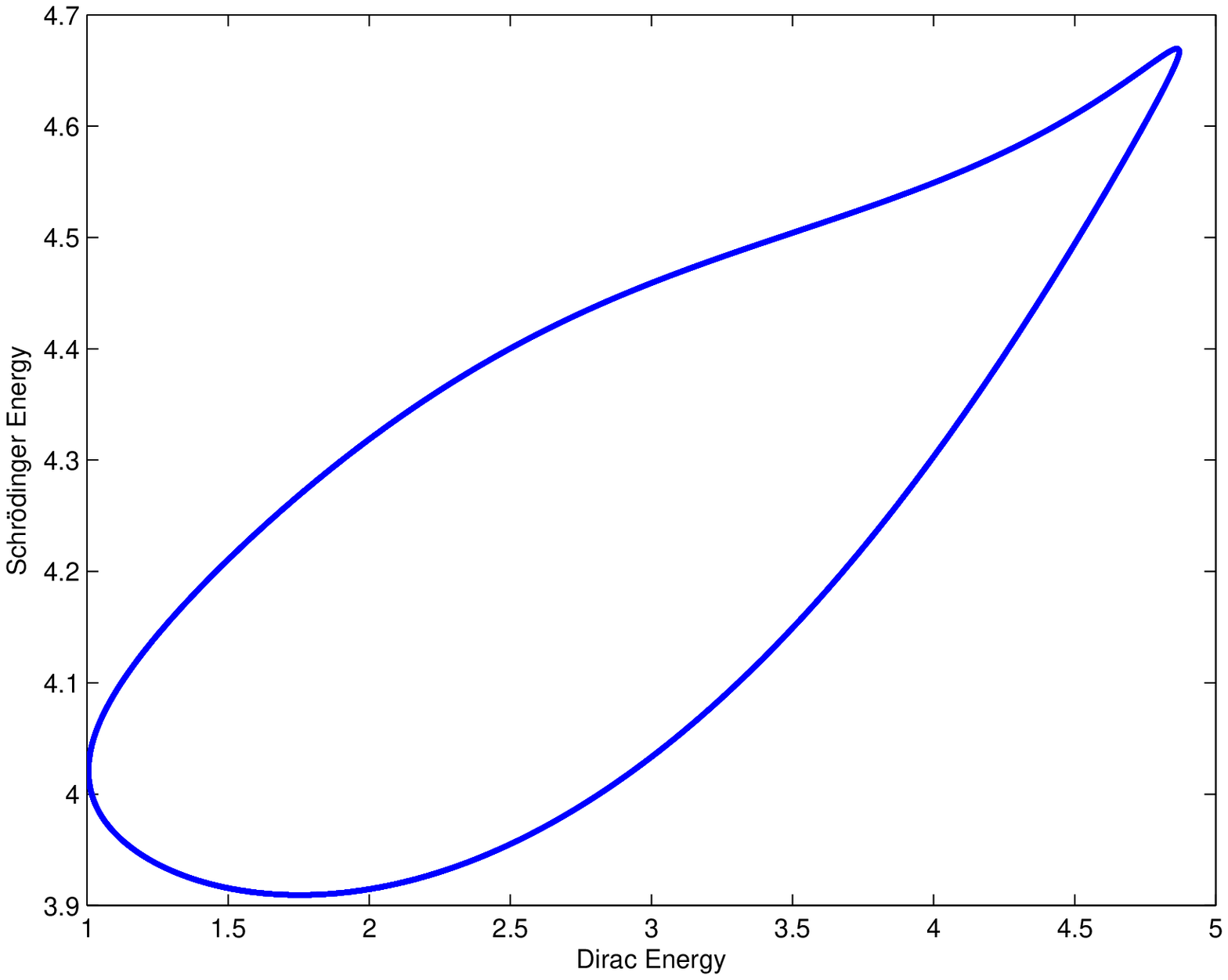}
   \includegraphics[width=2.5in]{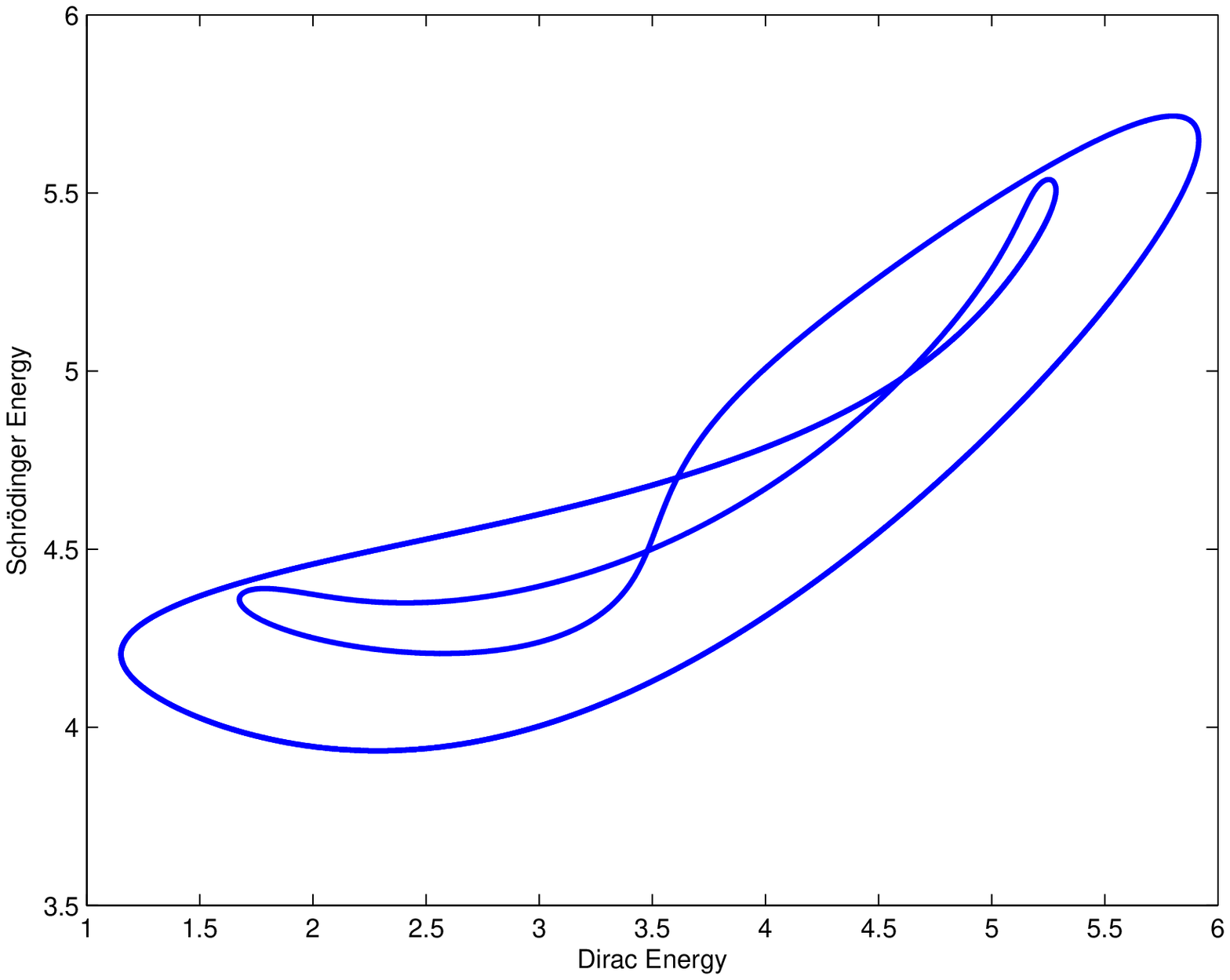}
  \caption{Top Left:  Plot  of the Dirac Final Energy versus Schr\"odinger Final Energy in the case of a periodic solution from the branch of periodic solutions in Figure \ref{f:eta} with $\gamma = .4$. Top Right:  Plot  of the Dirac Final Energy versus Schr\"odinger Final Energy in the case of a solution from the period doubling branch in Figure \ref{f:eta} with $\gamma = .4$.  Bottom:  Plot  of the Dirac Final Energy versus Schr\"odinger Final Energy in the case of a solution from the period doubling branch in Figure \ref{f:auto1} with $\gamma = .373$.}
\label{f:perorbitplots}
\end{figure}

Many people have studied numerical continuation of nonlinear states and periodic orbits in the context of NLS in the past by using shooting methods and finite difference approximations to turn infinite dimensional systems into large systems of ODEs.  See for instance \cite{SPLE,BZV} and references to them and therein.  In the recent thesis of Lee-Thorpe \cite{Lee-Thorpe}, the author implemented a spectral method in {\it AUTO}, which we have  similarly implemented here for the Zakharov system in order to take non-local operators into consideration.

\section{Discussion}

We have analytically and numerically observed rich dynamics in the dissipative periodic Zakharov system with forcing.  Open problems for future consideration include understanding the large exchange of energy from Schr\"odinger to Dirac, classifying dynamics for a larger range of energies, finding more bifurcation points, etc.  For small $\gamma$ values, we observe numerically that there is a great deal of energy transfer from the Schr\"odinger equation into the Dirac equation at the outset of the dynamics.  We have, using pseudospectral nonlinear continuation methods, discovered Hopf bifurcations, period doubling, branching of periodic orbits, and invariant tori.  However, we mention that using the orbits we have shadowed here, it is likely that further development of the adjoint continuation methods in \cite{aw} could allow one to construct nearby periodic solutions with great accuracy and hence move along the solution branches in a more robust manner.  This will be a topic of future work.


\begin{thebibliography}{100}
\bibitem{aw} D.~Ambrose and J.~Wilkening, {\em Computation of time-periodic solutions of the Benjamin-Ono equation}, J. Nonlin. Sci.,
 {\bf 20}, No. 3 (2010), 277--308.
 \bibitem{BZV} I.V. Barashenkov, E. V. Zemlyanaya, and T. C. Van Heerden, {\em Time-periodic solitons in a damped-driven nonlinear Schr\"odinger equation}, Phys. Rev. E, {\bf 83}, No. 5 (2011), 056609.
\bibitem{jbz} J.~Bourgain, {\em On the Cauchy and invariant measure problem for the periodic Zakharov system}, Duke Math J.,
 {\bf 76} (1994), 175--202.
\bibitem{CabralRosa} M.~Cabral and R.~Rosa, {\em Chaos for a damped and forced KdV equation} Physica D {\bf 192} (2004), 265--278.
\bibitem{CoxMatthews} S.M.~Cox and P.C.~Matthews, {\em Exponential time differencing for stiff systems} J. Comput. Phys. {\bf 176} (2002), 430--455.
\bibitem{auto} E. Doedel, B. Oldeman, et al, {\em AUTO-07P}.  
\bibitem{et2} M.~B.~Erdo\u{g}an and  N.~Tzirakis, {\em Long time dynamics for the forced and weakly damped Zakharov system on the torus,}  Anal. \& PDE {\bf  6-3} (2013), 723--750.
\bibitem{fla} I.~Flahaut, {\em Attractors for the dissipative Zakharov system}, Nonlinear Anal. {\bf (1991)}, 599--633. 
\bibitem{gm} O.~Goubet and I.~Moise, {\em Attractor for dissipative Zakharov system,} Nonlinear Analysis, {\bf 7} (1998), 823--847.
\bibitem{KassamTrefethen} A.-K.~Kassam and L.N. Trefethen, {\em Fourth-order time-stepping for stiff PDEs,}  SIAM J. Sci. Comput. {\bf 26}, (2005), 1214--1233.
\bibitem{Lee-Thorpe}  J. Lee-Thorpe, {\em Spectral continuation study of the temporally periodic solitons of the damped-driven nonlinear Schr\"odinger equations}, Diss. Univ. of Cape Town (2012).
\bibitem{Pang} T.~Pang,  {\em An Introduction to Computational Physics},  Cambridge University Press, New York (1997).
\bibitem{potter} T.~Potter,  {\em Effective dynamics for $N$-Solitons of the Gross-Pitaevskii equation},  J. Nonlin. Sci.  {\bf 22}, No. 3  (2001), 351--370.
\bibitem{sch} A. S. Shcherbina, {\em Gevrey regularity of the global attractor for the dissipative Zakharov system,} Dynamical Systems, Vol. {\bf 18}, {\bf 3} (2003), 201--225.
\bibitem{SPLE} K.H. Spatschek, H. Pietsch, E.W. Laedke, and T. Eickerman, {\em Chaotic behavior in time in Nonlinear-Schr\"odinger systems}, Proc. Int. Workshop on Nonlinear and Turbulent Processes in Physics, Kiev (1989).  
\bibitem{ht} H.~Takaoka, {\em Well-posedness for the Zakharov system with periodic boundary conditions,} Differential and Integral Equations, Vol. {\bf 12}, {\bf 6} (1999), 789--810.
\bibitem{temam} R.~Temam, {\em Infinite-dimensional dynamical systems in mechanics and physics}, Applied Mathematical Sciences {\bf  68}, Springer, 1997.
\bibitem{wwsk} M.O. Williams, J. Wilkening, E Shlizerman, and J.N. Kutz, {\em Continuation of periodic solutions in the waveguide array mode-locked laser}, Physica D {\bf  240} (2011), 1791-1804.
\bibitem{vz} V. E. Zakharov, {\em Collapse of Langmuir waves}, Soviet Journal of Experimental and Theoretical Physics, {\bf 35} (1972), 908--914.
\end{thebibliography}
 \end{document}